\documentclass[12pt, a4paper, twoside, reqno]{amsart}

\usepackage[margin=2.85cm]{geometry}

\usepackage[T1]{fontenc}
\usepackage[latin1]{inputenc}
\usepackage{amsmath}
\usepackage{amsthm}
\usepackage{amsfonts}
\usepackage{MnSymbol}
\usepackage{bbm}
\usepackage[UKenglish]{babel}
\usepackage{enumerate}
\usepackage[dvips]{graphicx}
\usepackage{bm}
\usepackage{ae}

\theoremstyle{definition}

\theoremstyle{remark}

\theoremstyle{plain}
\newtheorem{thm}{Theorem}
\newtheorem{lem}[thm]{Lemma}

\newtheorem{cor}[thm]{Corollary}

\newcommand{\norm}[1]{\ensuremath{\left\Vert #1 \right\Vert}}
\newcommand{\abs}[1]{\ensuremath{\left\vert #1 \right\vert}}

\DeclareMathOperator{\Bad}{Bad}
 
 \newcommand{\R}{\mathbb{R}}
 \newcommand{\N}{\mathbb{N}}
 \newcommand{\Q}{\mathbb{Q}}
 
 \newcommand{\rar}{\rightarrow}

\begin{document}

\title{Metrical musings on Littlewood and friends}

\author{A. HAYNES}

\address{A. Haynes, Department of Mathematics, University of Bristol,
  University Walk, Bristol, BS8 1TW, England}

\email{alan.haynes@bris.ac.uk}

\author{J. L. JENSEN}

\address{J. L. Jensen, Department of Mathematics, Aarhus University,
  Ny Munkegade 118, DK-8000 Aarhus C, Denmark}

\email{jonas@imf.au.dk}

\author{S. KRISTENSEN}

\address{S. Kristensen, Department of Mathematics, Aarhus University,
  Ny Munkegade 118, DK-8000 Aarhus C, Denmark}

\email{sik@imf.au.dk}

\thanks{AH supported by EPSRC grant EP/J00149X/1. SK's research supported by the Danish Research Council for
  Independent Research.}

\subjclass[2000]{11J83, 11H46}

\begin{abstract}
  We prove a metrical result on a family of conjectures related to the
  Littlewood conjecture, namely the original Littlewood conjecture,
  the mixed Littlewood conjecture of de Mathan and Teuli\'e and a
  hybrid between a conjecture of Cassels and the Littlewood
  conjecture. It is shown that the set of numbers satisfying a strong
  version of all of these conjectures is large in the sense of
  Hausdorff dimension restricted to the set of badly approximable
  numbers.
\end{abstract}

\maketitle

\section{Introduction}
\label{sec:introduction}

The Littlewood conjecture in Diophantine approximation
is concerned with the simultaneous approximation of two real numbers
by rationals with the same denominator. It states that for any pair
$\alpha, \beta \in \mathbb{R}$,
\begin{equation}
  \label{eq:1}
  \liminf_{q\in\N} q \norm{q \alpha} \norm{q \beta} = 0,
\end{equation}
where $\|\cdot\|$ denotes the distance to the nearest integer. It
follows from Dirichlet's theorem that for any single real number
$\alpha$,
\begin{equation}
  \label{eq:2}
  \liminf_{q\in\N} q \norm{q \alpha} \leq 1.
\end{equation}
This is best possible, apart from improvements in the constant on the
right hand side. Indeed if $\alpha\not\in\Q$ has bounded partial
quotients in its simple continued fraction expansion, then the left
hand side of \eqref{eq:2} is positive. Such a number $\alpha$ is
called \emph{badly approximable}, and we denote the collection of
badly approximable numbers by $\Bad\subseteq\R$.

By the Borel-Bernstein theorem (i.e. almost all real numbers have
unbounded partial quotients) the Lebesgue measure of $\Bad$ is $0$,
but by a theorem of Jarnik \cite{jarnik28:_zur_theor_approx} its
Hausdorff dimension is $1$. Furthermore since \eqref{eq:1} is
satisfied whenever $\alpha$ or $\beta$ lies in $\R\setminus\Bad$, it
follows immediately that the set of exceptions to the Littlewood
conjecture has Lebesgue measure $0$.

The Littlewood conjecture has a long history. One of the first
results, by Cassels and Swinner\-ton-Dyer \cite{MR0070653}, states
that the conjecture is satisfied if $\alpha$ and $\beta$ lie in the
same cubic extension of $\mathbb{Q}$. Although there have been several
further advances toward this conjecture, the most widely quoted recent
result is that of Einsiedler, Katok, and Lindenstrauss
\cite{MR2247967}, who used powerful measure rigidity theorems in the
space of unimodular lattices to show that the set of $(\alpha,
\beta)\in\R^2$ for which \eqref{eq:1} fails has Hausdorff dimension
$0$.

However one should not forget that this breakthrough was preceded by
an equally significant theorem of Pollington and Velani
\cite{MR1819996}, which states that for any $\alpha \in \Bad$, there
is a set $G \subseteq \Bad$ of Hausdorff dimension $1$ such that, for
all $\beta \in G$,
\[\liminf_{q\rar\infty}q(\log q)\|q\alpha\|\|q\beta\|\le 1.\]
The conclusion here is stronger than \eqref{eq:1} and it is worth
pointing out that, although it does not say as much about the
exceptional set in the Littlewood conjecture, this result does not
follow from the theorem of Einsiedler, Katok, and Lindenstrauss. More
explicit constructions in the spirit of Pollington and Velani's
approach have been obtained by Adamczewski and Bugeaud
\cite{MR2225491}.

A problem related to the Littlewood conjecture is the so-called mixed
Littlewood conjecture of de Mathan and Teuli\'e \cite{MR2103807}. In
its most basic form, the conjecture states that for any $\alpha \in
\mathbb{R}$ and any prime $p$,
\begin{equation}
  \label{eq:3}
  \liminf_{q \rightarrow \infty} q \abs{q}_p \norm{q \alpha} = 0,
\end{equation}
where $\abs{q}_p$ denotes the $p$-adic absolute value of $q$. As
above, \eqref{eq:3} is trivially satisfied unless
$\alpha \in \Bad$.

The mixed Littlewood conjecture has also attracted considerable
interest in recent years. Like the Littlewood conjecture, it has a
nice interpretation in terms of homogeneous dynamics. In additional
there are new techniques from $p$-adic analysis and one dimensional
dynamics which are available for studying this problem. The original
statement of the conjecture is actually slightly more general then
what we have mentioned so far. It involves the notion of a
\emph{pseudo-absolute value}, which we attend to presently.

A pseudo-absolute value is defined in terms of a sequence $\mathcal{D}
= (n_k)_{k=0}^\infty$, satisfying the conditions $n_0=1$ and $n_{k-1}
\vert n_k$ for all $k$. The associated pseudo-absolute value of an
integer $q$ is then defined as
\begin{equation}
  \label{eq:4}
  \abs{q}_{\mathcal{D}} = \min\{n_k^{-1} : q \in n_k\mathbb{Z}\}.
\end{equation}
For an integer $a$ if we set $n_k = a^k$, then associated
pseudo-absolute value becomes the usual $a$-adic absolute value. The
more general version of the mixed Littlewood conjecture is the
assertion that
\begin{equation}
  \label{eq:5}
  \liminf_{q\in\N} q \abs{q}_{\mathcal{D}} \norm{q \alpha} = 0,
\end{equation}
for any $\alpha \in \mathbb{R}$ and any pseudo-absolute value sequence
$\mathcal{D}$.

We remark that with an additional $p$-adic absolute value multiplied
onto the left hand side of \eqref{eq:5}, and with a mild growth
condition on $\mathcal{D}$, it is proved in \cite{harrap:_littl} that
\begin{equation*}
  \liminf_{q\in\N} q \abs{q}_p \abs{q}_{\mathcal{D}}
  \norm{q \alpha} = 0,
\end{equation*}
for all $\alpha \in \mathbb{R}$. This shows that
a slightly weaker statement than the mixed Littlewood conjecture is
certainly true.

A final problem related to the Littlewood conjecture is the an old
conjecture of Cassels, recently resolved by Shapira \cite{MR2753608}.
Shapira's theorem states that for Lebesgue-almost every $\alpha,
\beta$,
\begin{equation*}
  \liminf_{q \rightarrow \infty} q \norm{q\alpha - \gamma_1} \norm{q
    \beta - \gamma_2} = 0,
\end{equation*}
for all $\gamma_1, \gamma_2\in\R$. In our main result below we address
the case when $\gamma_1 = 0$ and consider more closely the collection
of $\alpha,\beta$ for which
\begin{equation}
  \label{eq:10}
  \liminf_{q \rightarrow \infty} q \norm{q\alpha} \norm{q \beta -
    \gamma} = 0,
\end{equation}
for every value of $\gamma$.  Of course as before the problem is
trivial unless $\alpha \in \Bad$. We will call \eqref{eq:10} the
hybrid Cassels--Littlewood equation.

Now we present our main result, in which we consider a simultaneous
version of the three above mentioned problems.

\begin{thm}
  \label{thm:main}
  Fix $\epsilon > 0$ and let $\{\alpha_i\} \subseteq \Bad$ be a
  countable set of badly approximable numbers, and $\{\mathcal{D}_j\}$
  a countable set of pseudo-absolute value sequences. Then there is
  set of $G\subseteq \Bad$ of Hausdorff dimension $1$ such that for
  any $\beta\in G$,\vspace*{.1in}
  \begin{itemize}
  \item[(i)]  For any $i \in \mathbb{N}$ and $\gamma
  \in \mathbb{R}$ the inequality
  \begin{equation}
    \label{eq:6}
    q \norm{q\alpha_i} \norm{q \beta - \gamma} < \frac{1}{(\log
      q)^{1/2-\epsilon}},
  \end{equation}
  has infinitely many solutions $q\in\N$, and\vspace*{.1in}

  \item[(ii)] For any $j \in \mathbb{N}$ and $\delta \in \mathbb{R}$ we have that
  \begin{equation}
    \label{eq:7}
    \liminf_{ q\rightarrow \infty} q \abs{q}_{\mathcal{D}_j} \norm{q
      \beta - \delta} = 0.
  \end{equation}
    \end{itemize}\vspace*{.1in}
    Furthermore, for each $j$ such that $\mathcal{D}_j = (n_k)$
    satisfies the inequality $n_k \leq C^k$ for some $C>1$, we may
    replace \eqref{eq:7} by the stronger statement that for any
    $\delta\in\R$ the inequality
  \begin{equation}
    \label{eq:9}
    q \abs{q}_{\mathcal{D}_j} \norm{q \beta - \delta} < \frac{1}{(\log
      q)^{1/2-\epsilon}},
  \end{equation}
  has infinitely many solutions $q\in\N$.
\end{thm}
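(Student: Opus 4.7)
The plan is to construct $G$ as a Cantor-like subset of $\Bad(M)=\{\beta:a_k(\beta)\leq M \text{ for all } k\}$ for a suitably large $M$, extending the approach of Pollington--Velani to the inhomogeneous setting and to pseudo-absolute values simultaneously. Since $\dim_H \Bad(M)\to 1$ as $M\to\infty$, I would fix $M$ so that $\Bad(M)$ has dimension at least $1-\eta$ for any prescribed $\eta>0$, ensuring the construction stays in $\Bad(M)\subseteq\Bad$. Fix countable dense subsets $\{\gamma_k\},\{\delta_\ell\}$ of $\R$ and enumerate the doubly-indexed target list
\[
\{(\alpha_i,\gamma_k)\}_{i,k\in\N}\cup\{(\mathcal{D}_j,\delta_\ell)\}_{j,\ell\in\N}
\]
as a single sequence; a diagonal scheme processes one target per stage of the Cantor construction.

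The unified building block is the observation that, for $\alpha\in\Bad(M)$, the convergent denominators $q_n(\alpha)$ satisfy $q_n\|q_n\alpha\|\asymp 1$, while for $\mathcal{D}=(n_k)$ with $n_k\leq C^k$ the integers $n_k$ grow geometrically and satisfy $n_k|n_k|_\mathcal{D}\leq 1$. In both cases the task reduces to the following: construct $\beta\in\Bad(M)$ such that, for any geometrically growing sequence of test denominators $\{Q_n\}$ and any target $\tau\in\R$, the inequality $\|Q_n\beta-\tau\|<(\log Q_n)^{-1/2+\epsilon}$ has infinitely many solutions in $n$. At the $n$th stage the surviving set is a union of intervals of length comparable to $Q_n^{-2}$; the positions of $Q_n\beta\bmod 1$ as $\beta$ varies across these intervals are approximately equidistributed in $\R/\Z$ by a three-distance-type lemma for $\Bad(M)$, so retaining only those intervals on which $Q_n\beta$ lands in the window of radius $(\log Q_n)^{-1/2+\epsilon}$ around $\tau$ keeps a fraction of the current mass of that order.

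The decisive step, and the main obstacle, is the dimension bookkeeping. After all stages the total dimension loss is governed by a series of the shape
\[
\sum_n\frac{\log\log Q_n}{\log Q_n},
\]
which converges because the $Q_n$ grow geometrically. The exponent $1/2$ on the logarithm is precisely the threshold at which the associated series remains controllable once the multiplicity of conditions processed per stage is taken into account; the small $\epsilon$ provides exactly the slack needed for a mass distribution principle to yield a measure on $G$ whose local dimension at every point is at least $1-O(\eta)$, and hence $\dim_H G\geq 1-O(\eta)$. Letting $\eta\to 0$ via a further intersection of such Cantor sets delivers $\dim_H G=1$. This delicate interplay between the thinning rate $(\log Q_n)^{-1/2+\epsilon}$ and the natural scale of the $\Bad(M)$ construction is why the exponent $1/2-\epsilon$ is forced in the statement.

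Finally, to pass from the dense countable targets to arbitrary $\gamma,\delta\in\R$, approximate $\gamma$ by $\gamma_k\to\gamma$ from the chosen dense set; for each $k$ there are infinitely many $q$'s satisfying the inequality for $\gamma_k$, and selecting $q^{(k)}$ large enough that $|\gamma-\gamma_k|$ is dominated by $(\log q^{(k)})^{-1/2+\epsilon}$ yields, via the triangle inequality, a solution for $\gamma$ at $q^{(k)}$ after an inconsequential weakening of $\epsilon$. Thus infinitely many $q$'s work for every $\gamma\in\R$, and the treatment of $\delta$ in part (ii) is identical. For the weaker $\liminf=0$ assertion in \eqref{eq:7} without the geometric-growth assumption on $\mathcal{D}_j$, only a single very small value of $q\abs{q}_{\mathcal{D}_j}\norm{q\beta-\delta}$ is required per target, which is trivially achievable inside the Cantor construction at negligible dimension cost.
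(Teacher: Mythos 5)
Your proposed Cantor-scheme is a genuinely different strategy from the paper's, but the central dimension bookkeeping does not work as stated. You claim the dimension loss is controlled by the series $\sum_n \frac{\log\log Q_n}{\log Q_n}$ and that this converges because $Q_n$ grows geometrically. But if $Q_n$ grows geometrically then $\log Q_n \asymp n$, and the series becomes $\sum_n \frac{\log n}{n}$, which \emph{diverges}. More directly: if at stage $n$ you retain only a fraction $\asymp (\log Q_n)^{-1/2+\epsilon} \asymp n^{-1/2+\epsilon}$ of the current intervals, then after $N$ stages the number of surviving intervals is throttled by $\prod_{n\le N} n^{-1/2+\epsilon}$, a super-exponentially small factor compared to the total number available, and the associated natural Cantor measure has local dimension tending to $0$, not $1$. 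The $\epsilon$ of slack does not fix this, since it does not change the divergence of $\sum \log n / n$. This is precisely the obstacle that forces the paper to avoid naive stage-by-stage thinning altogether.

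The paper's route is fundamentally different: it works with Kaufman's measure $\mu_M$ on $F_M$, a fixed Frostman measure whose Fourier transform has polynomial decay. Using that decay it proves an almost-sure discrepancy estimate $D_N(q_n\beta) \ll N^{1/2}(\log N)^{5/2+\nu}$ for lacunary $(q_n)$ (Corollary \ref{cor2}). Since $2N\psi(N) = 2N^{1/2+\epsilon}$ dominates this error term, the window $I_N^\gamma$ of radius $N^{-1/2+\epsilon}$ receives hits for $\mu_M$-a.e.\ $\beta$, simultaneously for \emph{every} $\gamma$ (and every $\delta$), with no thinning, no countable-dense approximation, and no loss of mass. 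The conclusion for the full $\mu_M$-measure set then transfers to Hausdorff dimension via the Mass Distribution Principle, and letting $M\to\infty$ (phrased in the paper as a contradiction argument) gives dimension $1$. By contrast, in your scheme, passing from a dense set of targets $\gamma_k$ to all $\gamma$ at the end is unnecessary in the paper's proof, and taking an \emph{intersection} over $\eta\to0$ at the end of your argument is also wrong: an intersection of sets of dimension $1-\eta_n$ can easily have dimension $0$; you want a union. In short, the proposal is missing the key idea (a measure on $F_M$ with good Fourier decay together with a discrepancy estimate), and the step you rely on — convergence of $\sum \log\log Q_n / \log Q_n$ — is false.
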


We should stress that while the Littlewood conjecture and the mixed
Littlewood conjecture are trivial whenever $\beta \notin \Bad$, this
is no longer the case for the inhomogeneous versions in the theorem.
Hence, the content of our result is of interest not only for badly
approximable numbers. Luckily, applying an argument analogous to our
proof below with the discrepancy estimate of R. C. Baker
\cite{MR623668} in place of our Corollary \ref{cor2}, we could get
Theorem \ref{thm:main} for Lebesgue-almost every $\beta$. For
comparison with Shapira's result \cite{MR2753608} this remains worth
stating.

A weaker version of our theorem is the following corollary.

\begin{cor}
  Let $\{\alpha_i\} \subseteq \Bad$ be a countable set of badly
  approximable numbers and $\{\mathcal{D}_j\}$ a countable set
  of pseudo-absolute value sequences. The set of $\beta \in \Bad$ for
  which the $\mathcal{D}_j$-mixed Littlewood conjectures are all
  satisfied and for which all pairs $(\alpha_i, \beta)$ satisfy the
  hybrid Cassels--Littlewood equation is of Hausdorff dimension $1$.
\end{cor}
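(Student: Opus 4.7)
The corollary is an almost immediate consequence of Theorem \ref{thm:main}. My plan is to fix any $\epsilon>0$ (e.g.\ $\epsilon=1/4$) and apply the theorem to the given countable families $\{\alpha_i\}$ and $\{\mathcal{D}_j\}$, producing a set $G\subseteq\Bad$ of Hausdorff dimension $1$.

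The first step is to verify that $G$ is contained in the set described by the corollary. For the mixed Littlewood part, specialize item (ii) of the theorem to $\delta=0$: for every $j$ one obtains $\liminf_{q\to\infty}q\abs{q}_{\mathcal{D}_j}\norm{q\beta}=0$, which is exactly the $\mathcal{D}_j$-mixed Littlewood conjecture. For the hybrid Cassels--Littlewood part, take item (i) as stated: since $(\log q)^{1/2-\epsilon}\to\infty$, the existence of infinitely many $q$ satisfying
\[
q\norm{q\alpha_i}\norm{q\beta-\gamma}<\frac{1}{(\log q)^{1/2-\epsilon}}
\]
forces $\liminf_{q\to\infty}q\norm{q\alpha_i}\norm{q\beta-\gamma}=0$ for every $i$ and every $\gamma\in\R$, which is \eqref{eq:10} for all pairs $(\alpha_i,\beta)$.

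The second step is the dimension statement. The set described in the corollary is a subset of $\R$, so its Hausdorff dimension is at most $1$. On the other hand, by the previous paragraph it contains $G$, whose Hausdorff dimension is $1$ by Theorem \ref{thm:main}. Hence the dimension equals $1$, completing the proof.

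There is no genuine obstacle here: all the work is buried in Theorem \ref{thm:main} itself. The corollary simply records that, discarding the quantitative logarithmic rate in (i) and the inhomogeneous shift $\delta$ in (ii), one recovers the qualitative statements of the hybrid Cassels--Littlewood equation and the mixed Littlewood conjecture simultaneously on a set of full Hausdorff dimension.
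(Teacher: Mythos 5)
Your argument is correct and is exactly what the paper intends: the corollary is stated as a ``weaker version'' of Theorem~\ref{thm:main} precisely because the quantitative decay rate $(\log q)^{1/2-\epsilon}\to\infty$ in (i) forces the $\liminf$ to vanish, and the inhomogeneous case $\delta=0$ of (ii) is the $\mathcal{D}_j$-mixed Littlewood conjecture. Nothing more is needed, and your proof matches the paper's (implicit) reasoning.
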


Considering only the cases where $\gamma = 0$, we get the following.

\begin{cor}
  \label{cor:main}
  Let $\{\alpha_i\} \subseteq \Bad$ be a countable set of badly
  approximable numbers and $\{\mathcal{D}_j\}$ a countable set
  of pseudo-absolute value sequences. The set of $\beta \in \Bad$ for
  which the $\mathcal{D}_j$-mixed Littlewood conjectures are all
  satisfied and for which all pairs $(\alpha_i, \beta)$ satisfy the
  Littlewood conjecture is of Hausdorff dimension $1$.
\end{cor}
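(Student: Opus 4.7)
The plan is to deduce Corollary \ref{cor:main} immediately from Theorem \ref{thm:main} by specialising the inhomogeneous shifts. Given the countable families $\{\alpha_i\}$ and $\{\mathcal{D}_j\}$, I would fix any $\epsilon>0$ (say $\epsilon = 1/4$) and apply Theorem \ref{thm:main} with precisely these data, obtaining a set $G\subseteq\Bad$ of Hausdorff dimension $1$ satisfying properties (i) and (ii) of the theorem for every $\gamma,\delta\in\R$.

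For such a $\beta\in G$, taking $\gamma=0$ in \eqref{eq:6} tells us that for each $i$ the inequality
\[
q\,\norm{q\alpha_i}\,\norm{q\beta} < \frac{1}{(\log q)^{1/2-\epsilon}}
\]
has infinitely many integer solutions, which is strictly stronger than
\[
\liminf_{q\to\infty} q\,\norm{q\alpha_i}\,\norm{q\beta} = 0,
\]
i.e.\ the Littlewood conjecture for the pair $(\alpha_i,\beta)$. Likewise, setting $\delta=0$ in \eqref{eq:7} gives $\liminf_{q\to\infty} q\,\abs{q}_{\mathcal{D}_j}\,\norm{q\beta} = 0$ for every $j$, which is exactly the $\mathcal{D}_j$-mixed Littlewood conjecture.

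Consequently the same set $G$ furnished by Theorem \ref{thm:main} already witnesses the conclusion of the corollary, and its Hausdorff dimension is $1$. There is no real obstacle here: the only thing to observe is that the single set $G$ produced by the theorem works uniformly for all $\gamma,\delta$, so the two specialisations $\gamma=0$ and $\delta=0$ are compatible without any further intersection or loss of dimension.
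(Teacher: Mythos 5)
Your proposal is exactly the paper's intended deduction: the authors introduce Corollary \ref{cor:main} with the phrase ``Considering only the cases where $\gamma=0$,'' i.e.\ one specialises $\gamma=0$ in \eqref{eq:6} and $\delta=0$ in \eqref{eq:7} for the full-dimensional set $G$ produced by Theorem \ref{thm:main}. Your observation that the single set $G$ works uniformly for all shifts, so no further intersection is needed, is the only point worth noting, and you have it right.
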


We point out that while these corollaries can be deduced using currently available techniques concerning measure rigidity in homogeneous spaces, our main theorem can not.

Our main result generalizes the result of Pollington and Velani
\cite{MR1819996} to cover both a countable number of $\alpha$'s and a
countable number of pseudo-absolute values. We also believe that, although we use some of the same techniques, our proof is substantially simpler.

We will deduce Theorem \ref{thm:main} from a result on the discrepancy
of certain sequences defined in terms of a generic point with respect
to a very special measure called Kaufman's measure. In the next section, we will
describe the properties of this measure and state the result on
uniform distribution. In Section \ref{sec:kaufm-meas-discr}, we will
prove Theorem \ref{thm:main}.  Finally, we give a few concluding
remarks.

Throughout we will use the Vinogradov notation and write $f \ll g$ for
two real quantities $f$ and $g$ if there exists a constant $C>0$ such
that $f \leq Cg$. If $f \ll g$ and $g \ll f$, we will write $f \asymp
g$. We will also as usual define the function $e(x) = \exp(2 \pi i x)$.

\section{Kaufman's measure and discrepancy}
\label{sec:kaufm-meas-discr}

The key tool in proving our main theorem is a result on the
discrepancy of certain sequences, which holds true for almost all
$\alpha$ with respect to a certain measure introduced by Kaufman
\cite{MR610711}.

Kaufman's measure $\mu_M$ is a measure supported on the set of real
numbers with partial quotients bounded above by $M$. To be explicit,
for each real number $\alpha \in [0,1)$, let
\begin{equation*}
  \alpha = [a_1, a_2, \dots] = \cfrac{1}{a_1 +\tfrac{1}{a_2 +
      \tfrac{1}{\dots}}}
\end{equation*}
be the simple continued fraction expansion of $\alpha$. For $M \geq
3$, let
\begin{equation}
  \label{eq:8}
  F_M = \{\alpha \in [0,1) : a_i(\alpha) \leq M \text{ for all }
    i \in \mathbb{N}\}.
\end{equation}

Kaufman \cite{MR610711} proved that the set $F_M$ supports a measure
$\mu_M$ satisfying a number of nice properties. For our purposes, we
need the following two properties.
\begin{enumerate}[(i)]
\item\label{item:1} For any $s < \dim(F_M)$, there are positive
  constants $c,l > 0$ such that for any interval $I \subseteq [0,1)$
  of length $\abs{I} \leq l$,
  \begin{equation*}
    \mu_M(I) \leq c \abs{I}^s.
  \end{equation*}
\item\label{item:2} For any $M$, there are positive constants $c, \eta
  >0$ such that the Fourier transform $\hat{\mu}_M$ of the Kaufman
  measure $\mu_M$ satisfies
  \begin{equation*}
    \hat{\mu}_M(u) \leq c \abs{u}^{-\eta}.
  \end{equation*}
\end{enumerate}
The first property allows us to connect the Kaufman measure with the
Hausdorff dimension of the set $F_M$ via the Mass Distribution
Principle. The second property provides a positive lower bound on the
Fourier dimension of the set $F_M$, but for our purposes the property
is used only in computations.

The second key tool is the notion of discrepancy from the theory of
uniform distribution. The discrepancy of a sequence in $[0,1)$
measures how uniformly distributed a sequence is in the interval.
Specifically, the discrepancy of the sequence $(x_n)$ is defined as
\begin{equation*}
  D_N(x_n) = \sup_{I \subseteq [0,1]} \abs{\sum_{n=1}^N \chi_I(x_n)
    - N \abs{I}},
\end{equation*}
where $I$ is an interval and $\chi_I$ is the corresponding
characteristic function. A sequence $(x_n)$ is uniformly distributed
if $D_N(x_n) = o(N)$.

Our key result is the following discrepancy estimate.

\begin{thm}
  \label{thm2}
  Let $\mu_M$ be a Kaufman measure and assume that for positive
  integers $u<v$ we have
  \begin{equation*}
    \sum_{n,m = u}^v \abs{a_n - a_m}^{-\eta} \ll \frac{1}{\log v}
    \sum_{n=u}^v \psi_n
  \end{equation*}
  where $(\psi_n)$ is a sequence of non-negative numbers and $\eta >
  0$ is the constant from property \eqref{item:2} of the Kaufman
  measure. Then for $\mu_M$-almost every $x \in [0,1]$ we have
  \begin{equation*}
    D_N(a_n x) \ll (N\log(N)^2 + \Psi_N)^{1/2} \log(N\log(N)^2 +
    \Psi_N)^{3/2 + \varepsilon} + \max_{n \leq N} \psi_n
  \end{equation*}
  where $\Psi_N = \psi_1 + \cdots + \psi_N$.
\end{thm}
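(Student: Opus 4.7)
The plan is to combine the Erd\H{o}s--Tur\'an inequality with an $L^2(\mu_M)$ estimate for exponential sums (obtained from the Fourier decay of $\mu_M$ together with the hypothesis of the theorem), and to pass from the resulting variance bound to an almost-everywhere bound by a G\'al--Koksma--Erd\H{o}s (Menshov--Rademacher) type lemma. Writing $S_N(h,x) = \sum_{n=1}^N e(h a_n x)$, Erd\H{o}s--Tur\'an states that for any integer $H \geq 1$,
\begin{equation*}
  D_N(a_n x) \ll \frac{N}{H} + \sum_{h=1}^H \frac{1}{h}\bigl|S_N(h,x)\bigr|,
\end{equation*}
so the problem reduces to controlling $|S_N(h,x)|$ for $\mu_M$-almost every $x$, uniformly in $h \leq H$; I will take $H \asymp N$ at the end, so that the $N/H$ term is absorbed.

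The variance of an increment is computed by a Parseval-type identity,
\begin{equation*}
  \int \bigl|S_v(h,x) - S_u(h,x)\bigr|^2 \, d\mu_M(x) = \sum_{n,m=u+1}^v \hat{\mu}_M\!\bigl(h(a_n-a_m)\bigr).
\end{equation*}
The diagonal contributes $v-u$, while property (ii) of the Kaufman measure bounds the off-diagonal terms by $c\,h^{-\eta}|a_n - a_m|^{-\eta}$; the hypothesis of the theorem then yields an off-diagonal contribution $\ll h^{-\eta}(\Psi_v - \Psi_u)/\log v$. Hence the variance of $S_v(h,\cdot) - S_u(h,\cdot)$ is controlled by $\Phi_h(v) - \Phi_h(u)$ with $\Phi_h(N) \asymp N + h^{-\eta}\Psi_N$. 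Applying the G\'al--Koksma--Erd\H{o}s lemma for each fixed $h$, and taking a countable union of $\mu_M$-null sets, delivers for $\mu_M$-almost every $x$ the bound
\begin{equation*}
  |S_N(h,x)| \ll \bigl(N + h^{-\eta}\Psi_N\bigr)^{1/2}\bigl(\log(N + h^{-\eta}\Psi_N)\bigr)^{3/2+\varepsilon},
\end{equation*}
valid for every $h$ and every $N$.

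Substituting this into Erd\H{o}s--Tur\'an with $H \asymp N$, the contribution of the ``trivial'' summand $N$ in $\Phi_h(N)$ is $\sum_{h=1}^H h^{-1}N^{1/2} \asymp (N\log^2 N)^{1/2}$, while the contribution of $h^{-\eta}\Psi_N$ is $\sum_{h=1}^H h^{-1-\eta/2}\Psi_N^{1/2} \ll \Psi_N^{1/2}$ by convergence of the series; together these assemble into the main term $(N\log^2 N + \Psi_N)^{1/2}(\log(\cdot))^{3/2+\varepsilon}$ of the theorem. The main obstacle I anticipate is a uniformity issue: the G\'al--Koksma lemma is applied once per $h$, and the underlying Menshov--Rademacher maximal step must be executed so that the implicit constants depend on $h$ only through the explicit factors of $\Phi_h(N)$. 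The additive remainder $\max_{n \leq N}\psi_n$ in the statement is expected to arise from precisely this maximal step, as an unavoidable pointwise bound on the contribution of a single worst-case term in the hypothesis $\sum |a_n - a_m|^{-\eta} \ll (\log v)^{-1}\sum \psi_n$ when translating from an average variance bound to a supremum over partial sums.
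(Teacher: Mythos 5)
Your overall strategy is the right one and matches the paper in spirit: Erd\H{o}s--Tur\'an, the Fourier decay of the Kaufman measure to compute an $L^2(\mu_M)$ bound, and then the G\'al--Koksma--Erd\H{o}s lemma (Lemma \ref{lem1}, taken from Harman) to upgrade the variance estimate to an almost-everywhere one. The key structural choice, however, differs from the paper, and the obstacle you flag at the end of your proposal is a genuine gap rather than a technicality to be smoothed over.

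You apply the maximal lemma once per frequency $h$ to $S_N(h,\cdot)$, obtaining for each $h$ a $\mu_M$-full set on which $|S_N(h,x)| \ll (\Phi_h(N))^{1/2}\log(\Phi_h(N))^{3/2+\varepsilon} + \max_n \phi_n^{(h)}$, and then sum the resulting bounds against $1/h$ inside Erd\H{o}s--Tur\'an. The difficulty is that Lemma \ref{lem1} produces, for each $h$, a constant $C_h(x)$ that depends on $x$: the almost-sure bound is proved by a Borel--Cantelli argument and holds from some $x$-dependent index onward, so there is no reason for the sequence $(C_h(x))_h$ to be bounded, let alone to satisfy $\sum_h C_h(x)/h \ll \log N$. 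Your substitution back into Erd\H{o}s--Tur\'an silently assumes such uniformity, and the lemma as stated does not provide it. One could imagine re-running the proof of the maximal lemma with explicit tracking of the exceptional sets in $h$, but that is a nontrivial modification and is not what you have written.

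The paper avoids this issue at the outset by choosing the function to which the maximal lemma is applied to be the \emph{entire} Erd\H{o}s--Tur\'an majorant,
\begin{equation*}
  F(u,v,x) \;=\; \sum_{h=1}^{v} \frac{1}{h}\left| \sum_{n=u}^{v} e(h a_n x) \right|,
\end{equation*}
rather than the individual $S_N(h,\cdot)$. Expanding $|F(u,v,x)|^2$ and integrating, one bounds $\int |F(u,v,x)|^2 \, d\mu_M$ by $\sum_{h,k\le v}(hk)^{-1}\int |\sum_n e(ha_nx)|^2 d\mu_M$, and after using property (ii) of $\mu_M$ and the hypothesis on $\sum|a_n-a_m|^{-\eta}$ this collapses to $\ll \sum_{n=u}^{v}(\log(n)^2 + \psi_n)$. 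Thus a single application of Lemma \ref{lem1} with $\phi_n = \log(n)^2+\psi_n$ controls $F(0,N,x)$, and hence $D_N(a_n x)$, with one $x$-dependent constant; the summation over $h$ is performed \emph{inside} the $L^2$ computation, where it costs only the factor $(\log v)^2$ that becomes the $N\log(N)^2$ in the theorem. Your intuition about the $\max_{n\le N}\psi_n$ term is essentially correct: it is the $\max_{n\le N}\phi_n$ remainder from Lemma \ref{lem1}, with the $\log^2 n$ part absorbed into the main term. To repair your argument, the cleanest fix is exactly this repackaging: apply the maximal lemma to $F(u,v,x)$ rather than to each $S_N(h,\cdot)$ separately.
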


\section{Proofs}
\label{sec:proof-main-theorem}

Initially, we prove Theorem \ref{thm2} and then proceed to deduce
Theorem \ref{thm:main}. For the proof of Theorem \ref{thm2}, we will
need the following result found in \cite{MR1672558}.

\begin{lem}
  \label{lem1}
  Let $(X,\mu)$ be a measure space with $\mu(X) < \infty$. Let
  $F(n,m,x),~ n,m\ge 0$ be $\mu$-measurable functions
  and let $\phi_n$ be a sequence of real numbers such that
  $\abs{F(n-1,n,x)} \leq \phi_n$ for $n\in\N$.  Let $\Phi_N =
  \phi_1 + \cdots + \phi_N$ and assume that $\Phi_N \to \infty$.
  Suppose that for $0 \leq u < v$ we have
  \begin{equation*}
    \int_X \abs{F(u,v,x)}^2 d\mu \ll \sum_{n=u}^v \phi_n.
  \end{equation*}
  Then for $\mu$-almost all $x$, we have
  \begin{equation*}
    F(0,N,x) \ll \Phi_N^{1/2} \log(\Phi_N)^{3/2+\varepsilon} +
    \max_{n \leq N} \phi_n.
  \end{equation*}
\end{lem}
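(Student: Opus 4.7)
The statement is the classical Gál--Koksma / Rademacher--Menshov almost-sure maximal inequality, and I would prove it by that standard route. The block $L^2$ bound together with the pointwise single-step bound $|F(n-1,n,x)|\le\phi_n$ is consistent only if $F$ is additive in its integer arguments, so I treat $F$ as a partial sum: write $F(u,v,x)=\sum_{n=u+1}^{v}g_n(x)$ with $g_n(x):=F(n-1,n,x)$ and $|g_n|\le\phi_n$, and use $F(u,v)+F(v,w)=F(u,w)$ throughout.

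Step one is to pass to a geometric subsequence indexed by $\Phi$ rather than by $N$ (because the target bound involves $\Phi_N$). Define $(N_K)$ by $\Phi_{N_K-1}<2^K\le\Phi_{N_K}$, so that $\Phi_{N_K}\asymp 2^K+\max_{n\le N_K}\phi_n$. The hypothesis gives $\int_X|F(0,N_K,x)|^2\,d\mu\ll 2^K$; Chebyshev with threshold $2^{K/2}K^{1/2+\varepsilon}$ yields a tail estimate $\ll K^{-1-2\varepsilon}$, which is summable in $K$. Borel--Cantelli then pins down
\[
|F(0,N_K,x)|\ll\Phi_{N_K}^{1/2}(\log\Phi_{N_K})^{1/2+\varepsilon}+\max_{n\le N_K}\phi_n
\]
for $\mu$-almost every $x$.

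Step two is to control the fluctuation inside each interval $[N_K,N_{K+1})$. Using the binary expansion of $m-N_K$, $F(N_K,m,x)$ is expressible as a sum of at most $\lceil\log_2(N_{K+1}-N_K)\rceil=O(K)$ aligned dyadic block increments at distinct scales. Cauchy--Schwarz across these $O(K)$ terms, combined with the hypothesis applied to each block, produces a Rademacher--Menshov-style maximal $L^2$ inequality
\[
\int_X\max_{N_K\le m<N_{K+1}}|F(N_K,m,x)|^2\,d\mu\ll K^2\!\!\sum_{n=N_K+1}^{N_{K+1}}\!\!\phi_n\ll K^2\cdot 2^K.
\]
Chebyshev with threshold $2^{K/2}K^{3/2+\varepsilon}$ gives a summable tail $\ll K^{-1-2\varepsilon}$, and Borel--Cantelli upgrades this to the almost-sure bound $\max_{N_K\le m<N_{K+1}}|F(N_K,m,x)|\ll\Phi_N^{1/2}(\log\Phi_N)^{3/2+\varepsilon}$. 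Splitting $F(0,N)=F(0,N_K)+F(N_K,N)$ for $N\in[N_K,N_{K+1})$ and combining the two steps yields the claimed estimate.

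The main obstacle is calibrating the logarithmic bookkeeping to land exactly at the exponent $3/2+\varepsilon$. Naive Chebyshev scale-by-scale followed by summing $L^2$ tails over the $O(K)$ sub-scales produces $K^{2+\varepsilon}$ or worse; it is the Rademacher--Menshov trick of combining the $O(K)$ scales into one integrated $L^2$-maximal bound via Cauchy--Schwarz (absorbing one factor of $K$ inside the square) that compresses the growth to the sharp $K^{3/2+\varepsilon}=(\log\Phi_N)^{3/2+\varepsilon}$. The $\max_{n\le N}\phi_n$ term in the conclusion absorbs the unavoidable one-step gap between $\Phi_{N_K}$ and $2^K$ when an individual $\phi_n$ happens to be exceptionally large.
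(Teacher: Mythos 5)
The paper does not actually prove Lemma \ref{lem1}; it is quoted verbatim (essentially) from Harman's \emph{Metric Number Theory} \cite{MR1672558}, so there is no in-paper proof for you to match. What you have sketched is the standard G\'al--Koksma / Rademacher--Menshov route, which is indeed how the result is proved in the literature, and you are also right that the statement as printed is incomplete: without an additivity or at least subadditivity hypothesis ($\abs{F(u,w,x)}\le\abs{F(u,v,x)}+\abs{F(v,w,x)}$ for $u<v<w$) the conclusion cannot follow, since the $L^2$ and single-step bounds give no way to compare $F(0,N,x)$ with $F(0,N',x)$. Your Step one is essentially correct once the Chebyshev threshold is taken as $\Phi_{N_K}^{1/2}K^{1/2+\varepsilon}$ rather than $2^{K/2}K^{1/2+\varepsilon}$ (since $\Phi_{N_K}$ may exceed $2^K$ by as much as $\phi_{N_K}$); the inequality $K\le\log_2\Phi_{N_K}$ then converts this into the desired $(\log\Phi_{N_K})^{1/2+\varepsilon}$ factor.

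The genuine gap is in Step two. You decompose $F(N_K,m,x)$ via the binary expansion of $m-N_K$ and assert that this uses $\lceil\log_2(N_{K+1}-N_K)\rceil=O(K)$ scales. That identity is false in general. The definition of $N_K$ controls only the $\Phi$-weight of the window ($\Phi_{N_{K+1}-1}-\Phi_{N_K}<2^K$), not its cardinality, and nothing in the hypotheses forbids $\phi_n$ from decaying: taking $\phi_n=1/n$ one has $\Phi_N\sim\log N$ and $\Phi_N\to\infty$, yet $N_K\sim e^{2^K}$, so $\log_2(N_{K+1}-N_K)\sim 2^{K+1}\gg K$. In that regime your index-dyadic Rademacher--Menshov bound reads $\int\max_m\abs{F(N_K,m,x)}^2\,d\mu\ll\bigl(\log(N_{K+1}-N_K)\bigr)^2\cdot 2^K$, which is of order $2^{3K}$ rather than $K^2 2^K$, and the ensuing Chebyshev/Borel--Cantelli step no longer lands at $\Phi_N^{1/2}(\log\Phi_N)^{3/2+\varepsilon}$. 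The correct device is to dyadically decompose with respect to the accumulated $\Phi$-weight rather than the raw index: first coarse-grain by setting $m_0=0$ and $m_{r+1}=\min\{m>m_r:\Phi_m-\Phi_{m_r}\ge 1\}$, so that consecutive coarse indices carry weight between $1$ and $1+\max_n\phi_n$, and then run your binary decomposition on the coarse indices $r$, of which there are $\asymp\Phi_{N_{K+1}}\asymp 2^K$ inside the window, yielding genuinely $O(K)$ scales. The surplus within a single coarse block is controlled pointwise by $\max_{n\le N}\phi_n$ via the hypothesis $\abs{F(n-1,n,x)}\le\phi_n$, which is precisely what the additive error term in the conclusion is for. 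With that repair your argument goes through and matches the standard proof.
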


We will also need the Erd\H{o}s--Tur\'an inequality (see e.g.
\cite{MR1297543}).

\begin{thm}
  \label{thm:E-T}
  For any positive integer $K$ and any sequence $(x_n) \subseteq
  [0,1)$,
  \begin{equation*}
    D_N(x_n) \leq \frac{N}{K+1} + 3 \sum_{k=1}^K \frac{1}{k}
    \abs{\sum_{n=1}^N e(k x_n)}.
  \end{equation*}
\end{thm}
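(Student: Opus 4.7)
The plan is to prove the Erdős--Turán inequality by the classical Fourier-analytic route, sandwiching the characteristic function of an interval between trigonometric polynomials of degree at most $K$. First I would reduce the supremum over $I$ in the definition of $D_N$ to arcs of the standard form $[0,\beta)$: since $\chi_{[a,b)} = \chi_{[0,b)} - \chi_{[0,a)}$ on $[0,1)$, the discrepancy contribution of a general half-open interval is the difference of two such contributions, so at the cost of a factor of two in the constants it suffices to estimate
\[
\sup_{\beta\in[0,1)} \abs{\sum_{n=1}^N \chi_{[0,\beta)}(x_n) - N\beta}.
\]

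Next I would invoke the Selberg (Beurling--Selberg) extremal polynomials. For each $\beta \in [0,1)$ and each positive integer $K$ there exist real, $1$-periodic trigonometric polynomials $V_K^\pm$ of degree at most $K$ satisfying
\[
V_K^-(x) \le \chi_{[0,\beta)}(x) \le V_K^+(x) \quad \text{for all } x \in \R/\Z,
\]
whose constant Fourier coefficient is $\hat V_K^\pm(0) = \beta \pm \tfrac{1}{K+1}$ and whose remaining Fourier coefficients obey $\abs{\hat V_K^\pm(k)} \le \tfrac{1}{K+1} + \tfrac{1}{\pi\abs{k}}$ for $1 \le \abs{k} \le K$, and vanish for $\abs{k} > K$. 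These polynomials are built by periodising Beurling's one-sided entire approximations to $\operatorname{sgn}$ evaluated at the two endpoints of the arc; verifying the extremality and these Fourier bounds is the main technical input, and is what I expect to be the hard part of the argument.

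Given the $V_K^\pm$, the rest is routine. Expanding in Fourier series at each $x_n$ and summing,
\[
\sum_{n=1}^N V_K^\pm(x_n) = N\hat V_K^\pm(0) + \sum_{0<\abs{k}\le K} \hat V_K^\pm(k) \sum_{n=1}^N e(k x_n),
\]
using the majorant as an upper and the minorant as a lower bound for $\sum_{n\le N} \chi_{[0,\beta)}(x_n)$, subtracting $N\beta$, and pairing $\pm k$ (noting $\overline{\sum_n e(kx_n)} = \sum_n e(-kx_n)$) gives
\[
\abs{\sum_{n=1}^N \chi_{[0,\beta)}(x_n) - N\beta} \le \frac{N}{K+1} + C \sum_{k=1}^K \frac{1}{k} \abs{\sum_{n=1}^N e(k x_n)}
\]
for an explicit numerical $C$. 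Taking the supremum over $\beta$, reinstating the factor of two from the reduction to arcs $[0,\beta)$, and checking that the resulting constant in front of the sum is at most $3$ yields the stated inequality.
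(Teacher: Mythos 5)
The paper does not prove Theorem \ref{thm:E-T}; it is quoted with a reference to Montgomery's lecture notes \cite{MR1297543}, where the inequality is proved exactly by the Beurling--Selberg extremal-function method you outline, so at the level of strategy you are aligned with the cited source. However, as written your argument does not deliver the stated constants because of the initial reduction to arcs of the form $[0,\beta)$. That reduction replaces $D_N$ by the star discrepancy $D_N^*=\sup_\beta|F(\beta)|$ with $F(\beta)=\sum_{n\le N}\chi_{[0,\beta)}(x_n)-N\beta$, and the relation $D_N\le 2D_N^*$ multiplies \emph{both} terms in your estimate by $2$: the main term becomes $\tfrac{2N}{K+1}$, which cannot be absorbed into the exponential-sum term (the latter can vanish, e.g.\ for $x_n=n/N$ with $N>K$, while $\tfrac{N}{K+1}$ does not), and the coefficient of the sum becomes roughly $2(2+2/\pi)\approx 5.27>3$. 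The fix is to skip the reduction entirely: every interval $I\subseteq[0,1]$ is already an arc on $\R/\Z$, and the Beurling--Selberg construction applies directly to $\chi_I$, producing trigonometric polynomials $V_K^\pm$ of degree at most $K$ with $V_K^-\le\chi_I\le V_K^+$, $\hat V_K^\pm(0)=|I|\pm\tfrac{1}{K+1}$, and $|\hat V_K^\pm(k)|\le\tfrac{1}{K+1}+\tfrac{1}{\pi|k|}$ for $1\le|k|\le K$. The Fourier-expansion step you describe then gives
\begin{equation*}
D_N(x_n)\le \frac{N}{K+1}+\sum_{k=1}^{K}\Bigl(\frac{2}{K+1}+\frac{2}{\pi k}\Bigr)\abs{\sum_{n=1}^{N}e(kx_n)},
\end{equation*}
and since $\tfrac{2}{K+1}+\tfrac{2}{\pi k}\le\tfrac{2+2/\pi}{k}<\tfrac{3}{k}$ for $1\le k\le K$, the stated bound follows.
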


\begin{proof}[Proof of Theorem \ref{thm2}]
  Suppose $M\ge 3$ and for integers $0 \leq u < v$ let
  \begin{equation*}
    F(u,v,x) = \sum_{h=1}^v \frac{1}{h} \abs{\sum_{n=u}^v e(h a_n
      x)}.
  \end{equation*}
  Theorem \ref{thm:E-T} with $K=N$ tells us that
  \[D_N(x_n)\ll F(0,N,x).\]
 Integrating with respect to $d\mu_M(x)$ and applying Cauchy-Schwartz gives
  \begin{align*}
    \int \abs{F(u,v,x)}^2 d\mu_M &\leq \sum_{h,k=1}^v
    \frac{1}{hk} \int \abs{\sum_{n=u}^v e(h a_n
      x)}^2 d\mu_M \\
    &= \sum_{h,k=1}^v \frac{1}{hk} \Bigg(v-u+1 + \mathop{\sum_{n,m =
        u}^v}_{n \neq m} \hat\mu_M(h(a_n - a_m)) \Bigg).
  \end{align*}
  Finally using property \eqref{item:2} of the Kaufman measure we have
  \begin{align*}
    \int \abs{F(u,v,x)}^2 d\mu_M &\ll \sum_{h,k=1}^v \frac{1}{hk}
    \Bigg(v-u+1 +  h^{-\eta} \mathop{\sum_{n,m = u}^v}_{n \neq m}
    \abs{a_n-a_m}^{-\eta} \Bigg) \\ &
    \ll \sum_{n=u}^v [\log(n)^2 + \psi_n].
  \end{align*}
  Since $F(n-1,n,x) \ll \log(n)^2 + \psi_n$ for all $n \geq
  1$, the theorem then follows from Lemma \ref{lem1}.
\end{proof}

We now state a corollary, which is of interest in its own right.
Assuming nothing about the sequence $(a_n)$ we get the following
corollary.
\begin{cor}
  \label{cor1}
  Let $\mu$ be a Kaufman measure. For $\mu$-almost every $x \in [0,1]$
  we have $D_N(a_n x) \ll N^{1-\nu}$ for some $\nu > 0$.  In
  particular $(a_n x)$ is uniformly distributed modulo 1.
\end{cor}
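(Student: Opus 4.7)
I interpret ``assuming nothing about the sequence $(a_n)$'' as requiring only that $(a_n)$ be a strictly increasing sequence of positive integers, so that $|a_n - a_m| \geq |n-m|$; some such mild hypothesis is certainly necessary, as a constant sequence is manifestly not equidistributed. The plan is then simply to apply Theorem \ref{thm2} with a universal choice of the weight sequence $\psi_n$.

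First I would estimate the double sum. For the relevant range $0 < \eta < 1$ (the standard case for Kaufman measures; larger $\eta$ only makes the estimate easier), the spacing hypothesis $|a_n - a_m| \geq |n-m|$ gives
\begin{equation*}
  \sum_{\substack{u\le n,m\le v\\ n\ne m}} |a_n - a_m|^{-\eta} \le 2\sum_{u\le n<m\le v}(m-n)^{-\eta} \ll (v-u)^{2-\eta}
\end{equation*}
by summing inner first over $m$ and then over $n$.

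Next, I would set $\psi_n = n^{1-\eta}\log n$, so that $\sum_{n=u}^v \psi_n \asymp (v^{2-\eta} - u^{2-\eta})\log v$. A short case analysis (separating $u \le v/2$, where $v^{2-\eta} - u^{2-\eta} \gg v^{2-\eta} \ge (v-u)^{2-\eta}$, from $u > v/2$, where the mean value theorem gives $v^{2-\eta} - u^{2-\eta} \gg v^{1-\eta}(v-u) \gg (v-u)^{2-\eta}$) confirms that $\sum_{n=u}^v \psi_n \gg (v-u)^{2-\eta}\log v$, so the hypothesis of Theorem \ref{thm2} is satisfied.

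Finally, Theorem \ref{thm2} applies with $\Psi_N \asymp N^{2-\eta}\log N$, which dominates $N\log^2 N$, so the discrepancy bound reads
\begin{equation*}
  D_N(a_n x) \ll N^{1-\eta/2}(\log N)^{2+\varepsilon}
\end{equation*}
for $\mu$-almost every $x$. This is $\ll N^{1-\nu}$ for any $0 < \nu < \eta/2$, and uniform distribution follows immediately. The only real obstacle is the case analysis in the second step: one needs the lower bound on $\sum_{n=u}^v \psi_n$ to hold uniformly across all $0 \le u < v$, especially in the short-range regime where $u$ is close to $v$, so that a single weight sequence suffices to invoke Theorem \ref{thm2}.
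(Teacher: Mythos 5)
Your proof is correct and follows essentially the same route as the paper: a weakest-case spacing bound $|a_n-a_m|\ge|n-m|$ (with the same implicit monotonicity hypothesis) to control the double sum, a polynomial choice of $\psi_n$ to invoke Theorem \ref{thm2}, and then $\Psi_N\asymp N^{2-\eta}\log N$ dominating the $N\log^2 N$ term to get a power-saving discrepancy bound. The only cosmetic differences are that the paper bounds the double sum by $v^{2-\eta}-u^{2-\eta}$ rather than $(v-u)^{2-\eta}$ and takes $\psi_n=n^{1-2\nu'}$ without a logarithmic factor, leaving the uniformity-in-$(u,v)$ check that you carried out explicitly to the reader.
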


\begin{proof}
  Without loss of generality we may assume that $a_n < a_{n+1}$, and we have that
  \[
  \sum_{n,m = u}^v \abs{a_n-a_m}^{-\eta} \leq 2\sum_{m=u}^{v-1}
  \sum_{n=m+1}^{v} \abs{n-m}^{-\eta} \ll v^{2-\eta} - u^{2-\eta} \ll
  \frac{1}{\log v} \sum_{n=u}^v n^{1-2\nu'}
  \]
  for some $\nu' > 0$, so for $\mu$-a.e. $x$ we have
  \[ D_N(a_n x) \ll (N \log(N)^2 + N^{2-2\nu'})^{1/2}(\log(N \log(N)^2
  + N^{2-2\nu'}))^{3/2 + \varepsilon} + N^{1-2\nu'} \\ \ll N^{1-\nu}
  \] for any $\nu > 0$ with $\nu < \nu'$.
\end{proof}

Corollary \ref{cor1} is sufficient to prove Corollary \ref{cor:main}
directly. However, we are aiming for a proof of the more general
Theorem \ref{thm:main}. For the purposes of this result, we
can specialize to lacunary sequences $a_n$.

\begin{cor}
  \label{cor2}
  Let $\nu > 0$ and let $\mu$ be a Kaufman measure and $(a_n)$ a
  lacunary sequence of integers. For $\mu$-almost every $x \in [0,1]$
  we have $D_N(a_n x) \ll N^{1/2} (\log N)^{5/2 + \nu}$.
\end{cor}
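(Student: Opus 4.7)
My plan is to deduce the corollary directly from Theorem \ref{thm2} by using the lacunarity of $(a_n)$ to verify its hypothesis with a convenient choice of $(\psi_n)$. Say $(a_n)$ is lacunary with ratio $\rho > 1$, so that $a_{n+1} \geq \rho a_n$ for every $n$. The key observation I would use is that for $m < n$,
\[
a_n - a_m \geq a_n - a_{n-1} \geq (1 - \rho^{-1}) a_n \gg \rho^n,
\]
which gives an exponential lower bound on all off-diagonal differences.

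From here I would bound
\[
\sum_{\substack{n,m=u \\ n \neq m}}^v \abs{a_n - a_m}^{-\eta} \ll \sum_{n=u+1}^v (n-u)\,\rho^{-\eta n} \ll 1,
\]
the implicit constant depending only on $\rho$ and $\eta$, so the left-hand side of the hypothesis of Theorem \ref{thm2} is bounded uniformly in $u$ and $v$. To fit this $O(1)$ bound into the shape required by Theorem \ref{thm2}, I would pick $\psi_n = \log(n+2)$. For any $u < v$ one has $\sum_{n=u}^v \psi_n \gg \log v$, so $\frac{1}{\log v}\sum_{n=u}^v \psi_n \gg 1$ and the hypothesis is verified. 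Moreover $\Psi_N = \sum_{n=1}^N \log(n+2) \asymp N \log N$ and $\max_{n \leq N} \psi_n \asymp \log N$, so $N\log(N)^2 + \Psi_N \asymp N\log(N)^2$.

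Plugging these estimates into the conclusion of Theorem \ref{thm2} with $\varepsilon$ chosen smaller than $\nu$ then yields, for $\mu$-almost every $x$,
\[
D_N(a_n x) \ll \bigl(N \log(N)^2\bigr)^{1/2} \log\!\bigl(N\log(N)^2\bigr)^{3/2+\varepsilon} + \log N \ll N^{1/2} (\log N)^{5/2+\nu},
\]
which is the desired bound. The only point requiring care is the choice of $(\psi_n)$: it must grow fast enough that $\sum_{n=u}^v \psi_n \gg \log v$ holds uniformly in $u < v$ (so the $O(1)$ gain from lacunarity can be re-expressed in the form demanded by the hypothesis) and yet slowly enough that $\Psi_N \ll N\log N$ so as not to spoil the target exponent $1/2$ on $N$. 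Logarithmic growth meets both constraints comfortably, so I do not expect a real obstacle in this step.
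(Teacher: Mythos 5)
Your proposal is correct and takes essentially the same route as the paper: both observe that lacunarity makes the off-diagonal sum $\sum_{n\neq m}\abs{a_n-a_m}^{-\eta}$ uniformly bounded, apply Theorem~\ref{thm2}, and note that the $\Psi_N$ and $\max\psi_n$ contributions are dominated by $N\log(N)^2$. Where the paper simply says the $\Psi_N$ and $\max_{n\le N}\psi_n$ terms can be ``absorbed into the implied constant'' without naming a $\psi_n$, you supply the explicit choice $\psi_n=\log(n+2)$ and verify the hypothesis cleanly, which is a welcome tightening of a somewhat terse step.
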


\begin{proof}
  We apply again Theorem \ref{thm2}. Using lacunarity of the sequence
  $(a_n)$, we see that
  \begin{equation*}
    \sum_{n,m=1}^\infty \abs{a_n - a_m}^{-\eta} < \infty.
  \end{equation*}
  Consequently, we can absorb all occurrences of $\Psi_N$ as well as
  the final term $\max_{n \leq N} \psi_n$ in the discrepancy estimate
  of Theorem \ref{thm2} into the implied constant. It follows that
  \begin{equation*}
    D_N(a_n x) \ll (N\log(N)^2)^{1/2} \log(N\log(N)^2)^{3/2 +
      \varepsilon} \ll N^{1/2} (\log N)^{5/2 + \nu}
  \end{equation*}
  for $\mu$-almost every $x$, where $\nu$ can be made as small as
  desired by picking $\varepsilon$ small enough.
\end{proof}

We are now ready to prove the main result, Theorem \ref{thm:main}.
\begin{proof}[Proof of Theorem \ref{thm:main}]
  Let $G$ denote the set from the statement of the theorem
  and suppose, contrary to what we are to prove, that $\dim g < 1$. Pick an $M\ge 3$
  such that $\dim F_M > \dim G$ (this can be done in light of Jarn\'ik's Theorem). Let
  $\mu = \mu_M$ denote the Kaufman measure on $F_M$.

  Consider first one of the $\alpha_i$, and let $(q_k)$ denote the
  sequence of denominators of convergents in the simple continued fraction expansion of $\alpha_i$. The sequence $q_k$ is lacunary hence by Corollary \ref{cor2}, for $\mu-$almost every $x$,
  \begin{equation*}
    D_N(q_n x) \ll N^{1/2} (\log N)^{5/2 + \nu}.
  \end{equation*}

  Let $\psi(N) = N^{-1/2+\epsilon}$ for some $\epsilon > 0$ and
  consider the interval 
  \[I^\gamma_N = [\gamma - \psi(N), \gamma +
  \psi(N)].\]
  By the definition of discrepancy, for every $\gamma \in
  [0,1]$ and $\mu$-almost every $\beta$
  \begin{equation*}
    \abs{\#\{k \leq N : \{q_k \beta\} \in I^\gamma_N\} - 2N\psi(N)}
    \ll  N^{1/2} (\log N)^{5/2 + \nu}.
  \end{equation*}
  Hence,
  \begin{alignat*}{2}
    \#\{k \leq N : \{q_k \beta\} \in I^\gamma_N\} &\geq 2N\psi(N) - K
    N^{1/2} (\log N)^{5/2 + \nu}\\
    &= 2 N^{1/2+\epsilon} - KN^{1/2} (\log N)^{5/2 + \nu},
  \end{alignat*}
  where $K>0$ is the implied constant from Corollary \ref{cor2}. Next let $N^\gamma_h$ denote the increasing sequences defined by
  \begin{equation*}
    N^\gamma_h = \min\big\{N \in \mathbb{N} : \#\{k \leq N : \{q_k
    \beta\} \in I^\gamma_N\} = h \big\}.
  \end{equation*}

  We claim that the sequence $q_{N^\gamma_h}$ satisfies \eqref{eq:6} for the
  given $\alpha_i$ and every $\gamma$ with $\mu$-almost every $\beta$.
  Indeed, since each $q_{N^\gamma_h}$ is a denominator of a convergent
  to $\alpha_i$,
  \begin{equation*}
    q_{N^\gamma_h} \norm{q_{N^\gamma_h} \alpha_i} \leq 1.
  \end{equation*}
  Hence,
  \begin{equation*}
    q_{N^\gamma_h} \norm{q_{N^\gamma_h} \alpha_i} \norm{q_{N^\gamma_h}
      \beta - \gamma} \leq \norm{q_{N^\gamma_h} \beta - \gamma} \leq
    \left(N^\gamma_h\right)^{-1/2+\epsilon}.
  \end{equation*}
  Since $\alpha_i \in \Bad$, the sequence of denominators of
  convergents $q_n$ is bounded between the Fibonacci sequence and
  $(2M)^n$, where $M$ is the maximal partial quotient in the simple
  continued fraction expansion of $\alpha_i$. Hence, $n \asymp \log
  q_n$, and it follows that
  \begin{equation*}
    q_{N^\gamma_h} \norm{q_{N^\gamma_h} \alpha_i} \norm{q_{N^\gamma_h}
      \beta - \gamma} \leq (\log q_{N^\gamma_h})^{-1/2+\epsilon/2},
  \end{equation*}
  whenever $h$ is large enough. This establishes our claim and shows that the exceptional set $E_i
  \subseteq F_M$ for which \eqref{eq:6} does not hold has $\mu(E_i) =
  0$.

  Consider now one of the absolute value sequences, $\mathcal{D_j} =
  \{r_k\}$. If we do not have an upper geometric growth rate of the
  sequence, applying Corollary \ref{cor1} implies that \eqref{eq:7}
  holds for $\mu$-almost every $\beta$. Indeed, let $\beta$ be such
  that $\{r_k \beta\}$ is uniformly distributed and suppose to the
  contrary that for some $\delta$ and $\epsilon > 0$,
  \begin{equation*}
    r_k \abs{r_k}_{\mathcal{D}_j} \norm{r_k \beta - \delta} > \epsilon,
  \end{equation*}
  for every $k$. Since $r_k \abs{r_k}_{\mathcal{D}_j} = 1$, this would
  violate the uniform distribution of the sequence $\{r_k \beta\}$ and
  complete the proof of the statement.

  Next we show that when $r_k$ has the right upper growth
  rate, the stronger statement \eqref{eq:9} holds.  By the
  divisibility condition, this sequence must again be lacunary. Hence,
  \begin{equation*}
    D_N(r_n x) \ll N^{1/2} (\log N)^{5/2 + \nu}.
  \end{equation*}
  Defining $I^\delta_N$ with $\delta$ in place of $\gamma$ and
  $\Psi(N)$ as before and repeating the argument from the
  Littlewood-case, we see that
  \begin{alignat*}{2}
    \#\{k \leq N : \{q_k \beta\} \in I^\delta_N\} &\geq 2N\psi(N) - K
    N^{1/2}
    (\log N)^{5/2 + \nu}\\
    &= 2 N^{1/2+\epsilon} - KN^{1/2} (\log N)^{5/2 + \nu}.
  \end{alignat*}
As before, we take a sequence $N^\delta_h$
  and note that since $r_{N^\delta_h}
  \abs{r_{N^\delta_h}}_{\mathcal{D}_j} = 1$, we immediately have for
  $\mu$-almost every $\beta$ a sequence $r_{N^\delta_h}$ satisfying
  the inequalities \eqref{eq:9}. Indeed,
  \begin{equation*}
    r_{N^\delta_h} \abs{r_{N^\delta_h}}_{\mathcal{D}_j}
    \norm{r_{N^\delta_h} \beta - \delta} \leq \norm{r_{N^\delta_h}
      \beta - \delta} \leq \left(N^\delta_h\right)^{-1/2+\epsilon}.
  \end{equation*}
  Since the upper and lower growth assumptions imply that $k \asymp
  \log r_k$, this implies that
  \begin{equation*}
    r_{N^\delta_h} \abs{r_{N^\delta_h}}_{\mathcal{D}_j}
    \norm{r_{N^\delta_h} \beta - \delta} \leq \left(\log
      q_{N^\delta_h}\right)^{-1/2+\epsilon},
  \end{equation*}
  whenever $h$ is large enough. The upshot is that the exceptional set
  $E'_j \subseteq F_M$ for which \eqref{eq:7} does not hold has
  $\mu(E'_j) = 0$.

  To conclude, let $E$ be the set of $\beta \in F_M$ for which there
  is an $i$ or a $j$ such that either \eqref{eq:1} or \eqref{eq:5} is
  not satisfied. Then,
  \begin{equation*}
    E = \bigcup_i E_i \cup \bigcup_j E'_j,
  \end{equation*}
  and therefore $\mu(E) = 0$.

  Finally $\mu(G)$ is maximal, so consider the trace measure
  $\tilde{\mu}$ of $\mu$ on $G$, defined by
  $\tilde{\mu}(X) = \mu(X \cap G)$. It follows from property
  \eqref{item:1} of Kaufman's measures that $\mu$ is a mass distribution on $[0,1)$, and
  since $G$ is full, $\tilde{\mu}$ inherits the decay
  property of \eqref{item:1} from $\mu$. By the Mass Distribution
  Principle it then follows that $\dim(G) = \dim(F_M) >
  \dim(G)$, which contradicts our original assumption. Therefore we conclude that $\dim(G)=1$.
\end{proof}

\section{Concluding remarks}
\label{sec:concluding-remarks}

We suspect that the rate of convergence in Theorem \ref{thm:main} can
be replaced by $(\log q)^{-1}$, at least in the case $\gamma = \delta
= 0$ and with the $\mathcal{D}_j$ growing at most geometrically. This is
certainly the case with \eqref{eq:6} for a single fixed $\alpha_i$ as
proved in \cite{MR1819996}. However it doesn't seem possible to prove this
using the method we have given here, without some significant modification.

We suspect that it is possible to improve the discrepancy estimate in
Corollary \ref{cor2} to replace the exponent $5/2 + \nu$ by $3/2 +\nu$
for $\mu_M$-almost every $x$. This is a natural conjecture in view of
the Lebesgue-almost sure discrepancy estimates of R. C. Baker
\cite{MR623668}. However, although it is an interesting problem in its
own right, for the Littlewood-type statements this would only give a
marginal improvement.

\def\cprime{$'$}
\providecommand{\bysame}{\leavevmode\hbox to3em{\hrulefill}\thinspace}
\providecommand{\MR}{\relax\ifhmode\unskip\space\fi MR }
\providecommand{\MRhref}[2]{%
  \href{http://www.ams.org/mathscinet-getitem?mr=#1}{#2}
}
\providecommand{\href}[2]{#2}

\end{document}